\topmargin -1.5cm \textwidth 6in \textheight 8.5in
\documentclass[11pt]{amsart}
\usepackage{color}
\usepackage{amssymb}
\theoremstyle{plain}
\newtheorem{Thm}{Theorem}

\newtheorem{Pro}[Thm]{Proposition}

\errorcontextlines=0

\begin{document}

\title[Liouville theorems for Ricci shrinkers]
{Liouville theorems, Volume growth, and volume comparison for Ricci shrinkers}

\author{Li Ma}

\address{ Prof. Dr.Li Ma\\
Department of mathematics \\
Henan Normal university \\
Xinxiang, 453007 \\
China
}
\email{lma@tsinghua.edu.cn}

\thanks{The research is partially supported by the National Natural Science
Foundation of China No.11271111. \\
 This work was done when the
author is visiting Minnesota University in September 2016 and he would
like to thank Prof. Jiaping Wang for helpful discussion and the hospitality of the school of Mathematics in Minnesota.
}

\begin{abstract}
In this paper, we study volume growth, Liouville theorem and the local gradient estimate for $f$-harmonic functions, and volume comparison property of unit balls in complete noncompact gradient Ricci shrinkers.
We also study integral properties of f-harmonic functions and harmonic functions on such manifolds.

{ \textbf{Mathematics Subject Classification 2000}: 53C21,
53C44, 53C21, 53C25}

{ \textbf{Keywords}: Ricci shrinkers, f-harmonic functions, Liouville theorem, volume
comparison}
\end{abstract}

 \maketitle

\section{introduction}
In the study of Ricci flow on a compact Riemannian manifold, because of its complicated nonlinearity, one meets singularities of the flow in finite time. After blowing up, one expects to get self-similar Ricci shrinkers or steady Ricci solitons (\cite{H}). In the case of type I singularity in dimension three, one gets nontrivial gradient Ricci shrinkers via the use Ivey-Hamilton pinching estimate and the classification of this type of self-similar Ricci shrinkers has been done by G.Perelman \cite{P}. In case of type I singularity of dimension four, A.Naber \cite{N} has showed that one gets a gradient Ricci shrinker and non-trivial property of this Ricci shrinker has been done later by others. These solitons can be considered as special examples of weighted Riemannian manifolds or metric measure spaces (\cite{BE} \cite{WW} \cite{LV} \cite{C} \cite{CLN}\cite{L}\cite{lo}\cite{MW1}\cite{Ma} \cite{S1,S2}\cite{Y} \cite{MW2} and \cite{MW3}). Because of the importance of four dimensional Ricci shrinkers, many people study various properties of them (just cite a few for references \cite{Ma}\cite{CZ} \cite{C1,C2}\cite{CN} \cite{MW1} and \cite{HM}).
In this paper, we study three questions about Ricci shrinking and steady solitons, in particular for Ricci shrinkers which are the complete Riemannian manifold $(M,g)$ such that $Ric_f:=Ric+D^2f=\lambda g$ on $M$, where $Ric$ is the Ricci curvature of $(M,g)$, $f:M\to R$ is a smooth function in $M$, and $\lambda>0$ is a constant. One question is about the volume comparison of unit balls on Ricci shrinkers. The other two are about f-harmonic functions and harmonic function with finite energy.  The volume comparison of unit balls is an important step to understand the volume growth of geodesic balls in the gradient Ricci shrinkers. The Liouville theorems for f-harmonic and harmonic functions with finite energy are important to understand the connectivity at infinity about gradient Ricci solitons, see \cite{MW1}.

We have the following new results. The first one is the volume comparison of unit balls at any point $x\in M$ with a unit ball at a fixed point $p\in M$, which is about the injectivity radius decay from the point $p$ to the point $x$ and it is important to understand the topology of the underlying manifold at infinity.

\begin{Thm}\label{wangjp} On the complete noncompact gradient Ricci shrinker $(M,g,f)$ with Ricci curvature bounded below by $-(n-1)k^2$ for some constant $k\geq 0$, we have
$$
Vol(B_x(1)\geq exp(-\sqrt{c(n-1)R})Vol(B_p(1)).
$$
where $c_0$ is a uniform constant which does not depends on $x$ and $R=d(p,x)>R_0$ for some uniform constant $R_0>1$.
\end{Thm}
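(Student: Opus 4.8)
The plan is to deduce the bound from a relative volume comparison along minimal geodesics emanating from $p$, using the special structure of the potential $f$ to beat the naive exponential-in-$R$ bound that the hypothesis $Ric \ge -(n-1)k^2$ alone would give. First I normalize $\lambda=\tfrac12$ by scaling the metric (this only affects the constants), and after replacing $p$ by a minimum point of $f$ — which changes both sides by a fixed factor absorbed into $c$ — I may assume $\nabla f(p)=0$; write $r(x)=d(p,x)$. I will freely use the standard soliton identities obtained by tracing and differentiating $Ric+D^2f=\tfrac12 g$, namely $\tr Ric+\Delta f=\tfrac n2$, $\nabla \tr Ric=2Ric(\nabla f)$, and (after the standard normalization of the additive constant) $\tr Ric+|\nabla f|^2=f$, together with the Cao--Zhou quadratic growth estimate $\tfrac14(r-c_1)^2\le f\le\tfrac14(r+c_1)^2$ for $r\ge r_0$ \cite{CZ}. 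Since $\tr Ric\ge 0$, this gives $|\nabla f|\le\tfrac12(r+c_1)$, while along a minimal geodesic from $p$ one has $f'(r)=\langle\nabla f,\gamma'\rangle$ asymptotic to $\tfrac12 r$.

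Second, along a unit-speed minimal geodesic $\gamma$ from $p$ to $x$ I would control the mean curvature $m(r)=\Delta r$ of the geodesic spheres through the Riccati inequality $m'\le-\tfrac{m^2}{n-1}-Ric(\gamma',\gamma')$. The essential point is that the radial Ricci curvature is exactly $Ric(\gamma',\gamma')=\tfrac12-f''(r)$, where $f(r):=f(\gamma(r))$, so that
\[
\int_0^R Ric(\gamma',\gamma')\,dr=\tfrac12 R-f'(R)+f'(0).
\]
Because $f'(R)=\tfrac12 R+o(R)$ by the growth estimate above, the integrated radial Ricci curvature along every such minimal geodesic is \emph{sublinear} in $R$, a decisive improvement over the bound $(n-1)k^2R$ coming from $Ric\ge-(n-1)k^2$.

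Third, I would convert this curvature control into the volume estimate. In geodesic polar coordinates at $p$ the area density $\mathcal A(r,\theta)$ satisfies $\partial_r\log\mathcal A=m$, and $Vol(B_x(1))$ is comparable to the integral of $\mathcal A$ over the radii $r\in[R-1,R+1]$ and the directions whose geodesics meet $B_x(1)$. Using the inclusions $B_x(1)\subset B_p(R+1)\subset B_x(2R+1)$ and the Bishop--Gromov inequality at $x$ (which needs precisely $Ric\ge-(n-1)k^2$ and provides volume doubling at $x$), the ratio $Vol(B_x(1))/Vol(B_p(1))$ is bounded below in terms of the decay of $\mathcal A$ along the geodesics from $p$. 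By the Riccati comparison this decay is governed by the integrated radial Ricci curvature, which the second step shows to be sublinear; the hypothesis $Ric\ge-(n-1)k^2$ controls the transverse contribution and the fixed core $B_p(r_0)$, and balancing the two on unit scales produces a mean-curvature defect of order $r^{-1/2}$, whose integral over $[0,R]$ is $O(\sqrt R)$. Exponentiating yields the factor $\exp(-\sqrt{c(n-1)R})$, as required for $R>R_0$.

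I expect the conversion in the third step to be the main obstacle. The soliton identity cleanly controls only the \emph{radial} Ricci curvature integrated along geodesics, whereas the volume of $B_x(1)$ also feels the transverse Ricci curvature and the angular spreading of the geodesic cone, for which only the blunt hypothesis $Ric\ge-(n-1)k^2$ is available. Extracting the precise exponent — showing that the effective mean-curvature defect decays like $r^{-1/2}$, so that its integral is $O(\sqrt R)$ rather than $O(R)$ — is the delicate quantitative point, and it is exactly here that the strict positivity $\lambda>0$ of the shrinker must be exploited: it forces the drift mean curvature $\Delta_f r=\Delta r-\langle\nabla f,\nabla r\rangle$ to decrease at rate $\lambda$ along geodesics and pins $\langle\nabla f,\nabla r\rangle\sim\tfrac12 r$, which is what turns the a priori linear growth of the radial Ricci integral into sublinear growth.
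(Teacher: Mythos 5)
Your first two steps are sound and in fact reproduce the key algebraic point of the paper's proof: restricting the shrinker equation to a minimal geodesic gives $Ric(\gamma',\gamma')=\tfrac12-f''$, and integrating this against the Riccati inequality, with the Cao--Zhou quadratic growth of $f$, produces the crucial cancellation between the $\tfrac12 R$ term and $f'$. The genuine gap is in your third step, and it comes from centering the polar coordinates at $p$ instead of at $x$. The Riccati inequality only gives \emph{upper} bounds on the mean curvature $m$ of geodesic spheres about the chosen center, hence only upper bounds on the growth of the area density $\mathcal A(r,\theta)$ along the ray. With the center at $p$ this yields $\mathcal A(R,\theta)\leq e^{o(R)}\mathcal A(1,\theta)$, i.e.\ it bounds the volume element near $x$ from \emph{above} in terms of data near $p$ --- the wrong direction for the claimed inequality $Vol(B_x(1))\geq e^{-\sqrt{c(n-1)R}}Vol(B_p(1))$. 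No one-sided Riccati estimate can bound the far volume element from below (geodesic spheres about $p$ are free to collapse as far as that inequality is concerned). Your proposed repair, Bishop--Gromov at $x$ applied to the inclusion $B_x(1)\subset B_p(R+1)\subset B_x(2R+1)$, is quantitatively fatal: with only $Ric\geq-(n-1)k^2$, the doubling constant over a scale comparable to $R$ is of order $e^{C(n)kR}$, which swamps the $e^{-\sqrt{cR}}$ you are trying to prove. The assertion that ``balancing the two on unit scales produces a mean-curvature defect of order $r^{-1/2}$'' is exactly the missing step, and nothing in the proposal supplies it.

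The paper avoids this entirely by putting the polar coordinates at $x$ and running the geodesic $\gamma$ \emph{from} $x$ \emph{into} $B_p(1)$, with $\gamma(0)=x$, $\gamma(T)\in B_p(1)$, $T\in[R-1,R+1]$. Then the same cancellation ($-\tfrac{T-1}{2}$ against $f'(T)-f'(1)$, where now $|f'(T)|\leq c$ because $\gamma(T)$ lies near the minimum of $f$) gives $m(T)+\tfrac{1}{n-1}\int_1^T m^2\,dr\leq c_0$, where the hypothesis $Ric\geq-(n-1)k^2$ enters only once, through Laplacian comparison, to get the uniform bound $m(s)\leq c_0$ for $s\in[1/2,1]$. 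Cauchy--Schwarz turns this into $m(T)+\tfrac{1}{(n-1)T}\bigl(\int_1^T m\bigr)^2\leq c$, and a short ODE contradiction argument (the paper's Claim) then yields $\int_1^T m\,dr\leq\sqrt{c(n-1)T}$. Since $m=\tfrac{d}{dr}\log J$ for spheres centered at $x$, this is precisely the inequality $J(x,1,\xi)\geq e^{-\sqrt{c(n-1)R}}J(x,T,\xi)$, which integrates over the directions $\xi$ hitting $B_p(1)$ to give the theorem directly --- the upper bound on $m$ now points the right way, and no doubling at scale $R$ is ever needed. If you re-run your steps 1--2 with the coordinates based at $x$ and add the Cauchy--Schwarz/ODE step to extract the $\sqrt{R}$, your argument becomes the paper's.
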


With the extra condition that the Ricci curvature has a lower bound, this improves the result of Lemma 5.2 in \cite{MW1}.

By the well-known argument, we know that there is no nontrivial positive $f$-harmonic function on a gradient Ricci shrinker. In fact there is no non-constant $f$-super-harmonic function $u$ ($\Delta_fu:=\Delta u-\nabla f.\nabla u\leq 0$) on the complete Riemannian manifold $(M,g)$ with $Ric_f\geq \frac{\lambda}g$ on $M$ for some positive constant $\lambda$. The process of proving this is below. Recall that the weighted volume of $(M,g)$ is finite \cite{M}, i.e., $V_f(M):=\int_M e^{-f}dv_g<\infty$. Assume $u>0$ is such a positive $f$-super-harmonic function on $M$. Let $v=\log u$. Then we have
$$
\Delta_f v=\frac{\Delta_f u}{u}-|\nabla v|^2\leq -|\nabla v|^2.
$$
Then we have, for any cut-off function $\phi\geq0$ on the ball $B_p(2R)\subset M$ with $\phi=1$ in $B_p(R)$,
$$
\int_M|\nabla v|^2\phi^2e^{-f}dv_g\leq 2\int_M \phi\nabla\phi.\nabla ve^{-f}dv_g.
$$
By the Cauchy-Schwartz inequality we get that
$$
\int_M|\nabla v|^2\phi^2e^{-f}dv_g\leq 4\int_M |\nabla\phi|^2e^{-f}dv_g\leq \frac{16}{R^2}V_f(M)\to 0
$$
as $R\to\infty$. Hence, $|\nabla v|^2=0$ in $M$, which implies that $u$ is a constant on $M$.

With this regard, it is interesting to understand the gradient estimate of $f$-harmonic function defined locally in part of $M$ (\cite{CLN}). We have the following local gradient estimate for positive $f$-harmonic functions in the ball $B_p(2R)$.
\begin{Thm}\label{wangjp1}
Let $(M,g,f)$ be a complete noncompact Ricci shrinker such that $Ric_f:=Ric+D^2f=\frac12 g$ with the Ricci curvature bounded below by $-(n-1)k^2$ for some constant $k\geq 0$.
For any positive $f$-harmonic function $u$ defined in the ball $B_p(2R)$, we have
$$
\frac{u(x)}{u(p)}\leq Cexp(Cd(x,p)^2), \ \ x\in B_p(R)
$$
for some uniform constant $C>0$, where $R>1$.
\end{Thm}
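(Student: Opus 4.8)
The plan is to pass to $v=\log u$, turn the theorem into a local gradient bound for $v$, and then run a weighted Cheng--Yau maximum principle argument. Since $u>0$ is $f$-harmonic, $v=\log u$ satisfies $\Delta_f v=\frac{\Delta_f u}{u}-|\nabla v|^2=-|\nabla v|^2$, exactly as in the computation preceding the statement. Writing $w=|\nabla v|^2$, the goal becomes the pointwise estimate $w(y)\le C\bigl(1+d(y,p)^2\bigr)$ at a suitable scale, because integrating $|\nabla v|=\sqrt w$ along a minimal geodesic from $p$ to $x$ then yields $\log\frac{u(x)}{u(p)}=v(x)-v(p)\le C\bigl(1+d(x,p)^2\bigr)$, which is the claim after exponentiating. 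The starting identity is the weighted (Bakry--\'Emery) Bochner formula $\tfrac12\Delta_f|\nabla v|^2=|D^2v|^2+\langle\nabla v,\nabla\Delta_f v\rangle+Ric_f(\nabla v,\nabla v)$, into which I substitute $\Delta_f v=-w$ and the soliton normalization $Ric_f=\frac12 g$, obtaining $\tfrac12\Delta_f w=|D^2v|^2-\langle\nabla v,\nabla w\rangle+\tfrac12 w$.

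Next I would convert this into a differential inequality quadratic in $w$. The Cauchy--Schwarz bound $|D^2v|^2\ge\frac{(\Delta v)^2}{n}$ reintroduces the ordinary Laplacian, and here $\Delta v=\Delta_f v+\langle\nabla f,\nabla v\rangle=-w+\langle\nabla f,\nabla v\rangle$, so the unbounded drift $\nabla f$ enters. To control it I invoke the Cao--Zhou growth estimate for shrinkers with $Ric_f=\frac12 g$, namely $|\nabla f|\le\frac12 r+c$ (see \cite{CZ}); thus on $B_p(2R)$ one has $|\nabla f|^2\le C(1+R^2)$. Using $(\Delta v)^2\ge\frac12 w^2-|\nabla f|^2 w$ gives, after dropping the favorable $\frac12 w$ term,
$$
\Delta_f w\ \ge\ \frac{1}{n}w^2-C(1+R^2)\,w-2\langle\nabla v,\nabla w\rangle \quad\text{on }B_p(2R).
$$
The unbounded drift is the essential difficulty and the reason the estimate is only quadratic in $d(x,p)$: the term $C(1+R^2)w$ is what forces $|\nabla\log u|$ to be allowed to grow linearly.

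The final step is the localization. I choose a cutoff $\phi=\psi(r/R)$ supported in $B_p(2R)$, equal to $1$ on $B_p(R)$, with $\frac{|\nabla\phi|^2}{\phi}\le\frac{C}{R^2}$ and a lower Hessian-type bound, and I apply the maximum principle to $G=\phi w$ at its interior maximum $x_0$ (handling the cut locus by Calabi's barrier trick). At $x_0$ one has $\nabla G=0$ and $\Delta_f G\le0$; inserting the differential inequality and using $\phi\nabla w=-w\nabla\phi$ eliminates $\langle\nabla v,\nabla w\rangle$ in favor of a cross term estimated by Young's inequality. The remaining new ingredient is a bound on $\Delta_f\phi=\phi'\Delta_f r+\phi''$, for which I use the hypothesis $Ric\ge-(n-1)k^2$: the ordinary Laplacian comparison gives $\Delta_f r=\Delta r-\langle\nabla f,\nabla r\rangle\le (n-1)k\coth(kr)+|\nabla f|$ on the annulus $R\le r\le 2R$, and together with the Cao--Zhou bound this keeps all coefficients polynomial in $R$. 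Collecting terms yields $\frac{1}{2n}G^2\le C\bigl(1+R^2+R^{-2}\bigr)G$ at $x_0$, hence $w\le C(1+R^2+R^{-2})$ on $B_p(R)$. Applying this at scale $R=d(x,p)$ and integrating $\sqrt w$ along the geodesic to $x$ produces $v(x)-v(p)\le C(1+d(x,p)^2)$, which completes the proof. The main obstacle throughout is the absence of a bound on $\nabla f$; everything hinges on replacing it by the linear shrinker growth estimate, and on the bookkeeping that keeps the constants uniform in $x$.
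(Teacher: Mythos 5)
Your proposal is correct, and its analytic core coincides with the paper's: the substitution $v=\log u$ with $\Delta_f v=-|\nabla v|^2$, the weighted Bochner formula combined with $Ric_f=\frac12 g$, the Cauchy--Schwarz step $|\nabla^2 v|^2\ge (\Delta v)^2/n$ with $\Delta v=-|\nabla v|^2+\langle\nabla f,\nabla v\rangle$, the use of the shrinker growth bound $|\nabla f|\le\sqrt f\le\frac12 d+c$ to tame the drift, and a maximum-principle argument for a cut-off version of $|\nabla v|^2$, followed by multi-scale application and integration along a minimal geodesic. The one genuine divergence is the localization. You cut off in the distance function, $\phi=\psi(r/R)$, which obliges you to bound $\Delta_f r$; for this you invoke the Laplacian comparison theorem (hence the hypothesis $Ric\ge -(n-1)k^2$, plus Calabi's barrier trick at the cut locus) together with the Cao--Zhou estimate to control $\langle\nabla f,\nabla r\rangle$. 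The paper instead cuts off in the potential function, $\eta=\phi(\sqrt f/R)$: on a normalized shrinker one has $|\nabla\sqrt f|\le 1$ and $\Delta_f f=\frac n2-f$, so the needed bounds $|\nabla f\cdot\nabla\eta|\le C$ and $\Delta_f\eta-2|\nabla\eta|^2/\eta\ge -C$ follow from soliton identities alone, with no comparison geometry and no cut-locus issue, since $\sqrt f$ is smooth and its sublevel sets are comparable to geodesic balls by the same growth estimates. The trade-off is instructive: your route is the classical Cheng--Yau scheme and genuinely uses the curvature hypothesis stated in the theorem (which, notably, the paper's own proof never invokes), whereas the paper's route is technically cleaner and shows the Ricci lower bound is dispensable. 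One point of bookkeeping common to both arguments, which you handle correctly by rescaling to $R=d(x,p)$: since $u$ is only defined on $B_p(2R)$, the cutoff at scale $\rho$ must be supported in $B_p(2R)$, which holds because $\rho\le d(x,p)<R$; with that, $w\le C(1+d(x,p)^2)$ along the whole geodesic from $p$ to $x$, and integration gives $v(x)-v(p)\le C(1+d(x,p)^2)$, i.e.\ the stated bound.
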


We are now trying to find another kind of Liouville theorem for $f$-harmonic functions on a gradient Ricci shrinker. We show that as a direct consequence of Bochner type formula (see \cite{MD}), we have the following Liouville type theorem.

\begin{Thm}\label{wangjp1+}
Let $(M,g,f)$ be a complete noncompact Ricci shrinker such that $Ric_f:=Ric+D^2f\geq \frac12 g$.
There is no nontrivial $f$-harmonic function $u$ defined in $(M,g))$ with weighted finite energy, i.e.,
$$
\int_M(|\nabla u|^2)e^{-f}dv_g<\infty.
$$
\end{Thm}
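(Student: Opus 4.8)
The plan is to combine the weighted Bochner formula with an integration-by-parts argument against a compactly supported cutoff, invoking the finite-energy hypothesis only at the very end. Writing $w=|\nabla u|^2$, the Bakry--\'Emery Bochner formula (the Bochner-type formula of \cite{MD}) gives
$$
\frac{1}{2}\Delta_f w = |D^2 u|^2 + \langle\nabla u,\nabla(\Delta_f u)\rangle + Ric_f(\nabla u,\nabla u).
$$
Since $u$ is $f$-harmonic the middle term drops out, and the hypothesis $Ric_f\geq\frac12 g$ then yields the key differential inequality
$$
\frac{1}{2}\Delta_f w \geq |D^2 u|^2 + \frac{1}{2} w.
$$

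Next I would fix a cutoff $\phi\geq 0$ with $\phi=1$ on $B_p(R)$, $\phi=0$ outside $B_p(2R)$ and $|\nabla\phi|\leq C/R$, multiply the inequality by $\phi^2$, and integrate against the weighted measure $e^{-f}dv_g$. Because $\Delta_f$ is self-adjoint with respect to $e^{-f}dv_g$ and $\phi$ has compact support (so there are no boundary terms and every integral is finite on $B_p(2R)$), integration by parts on the left produces the term $-\int_M\phi\,\nabla\phi\cdot\nabla w\,e^{-f}dv_g$. The point is then to control this term: since $\nabla w = 2\,D^2u(\nabla u,\cdot)$ we have $|\nabla\phi\cdot\nabla w|\leq 2|\nabla\phi|\,|D^2u|\,w^{1/2}$, and Young's inequality, with the balancing constant chosen to be $1$, gives the bound $\phi^2|D^2u|^2 + |\nabla\phi|^2 w$.

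Inserting this bound and cancelling the term $\int_M\phi^2|D^2u|^2e^{-f}dv_g$ that now appears on both sides, I expect to arrive at the clean inequality
$$
\int_M|\nabla\phi|^2 w\,e^{-f}dv_g \geq \frac{1}{2}\int_M\phi^2 w\,e^{-f}dv_g .
$$
Using $|\nabla\phi|\leq C/R$ and letting $R\to\infty$, the left-hand side is dominated by $\frac{C}{R^2}\int_M|\nabla u|^2 e^{-f}dv_g$, which tends to $0$ precisely because of the finite-energy hypothesis, while the right-hand side converges to $\frac12\int_M|\nabla u|^2 e^{-f}dv_g$ by monotone convergence. Hence $\int_M|\nabla u|^2e^{-f}dv_g=0$, so $u$ is constant.

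The step carrying the real weight of the argument is the last one: the whole theorem hinges on the finite-energy assumption forcing the $|\nabla\phi|^2$-term to vanish in the limit, together with the exact cancellation of the Hessian contribution, which is what lets the positive zeroth-order term $\frac12 w$ survive. I would also be careful to note that no global integrability of $|D^2u|^2$ is needed, since all manipulations take place on the compact set $B_p(2R)$, so the cancellation is legitimate before passing to the limit. Finally, the sharp constant $\frac12$ in $Ric_f\geq\frac12 g$ is inessential for the conclusion; any fixed positive lower bound works, though $\frac12$ is the natural normalization for a shrinker.
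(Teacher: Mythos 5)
Your proof is correct, and it rests on the same pillars as the paper's: the weighted Bochner formula, the hypothesis $Ric_f\geq\tfrac12 g$ to produce the zeroth-order term $\tfrac12|\nabla u|^2$, a cutoff supported in $B_p(2R)$, and the finite weighted energy to kill the error term as $R\to\infty$. Where you genuinely diverge is in how the cutoff is handled, and your version is actually the more robust one. The paper multiplies the Bochner inequality by $\phi$ (not $\phi^2$) and integrates by parts \emph{twice}, moving the whole operator onto the cutoff so that the error term becomes $\int_M(\tfrac12\Delta_f\phi)|\nabla u|^2 e^{-f}dv_g$, and then simply asserts that this tends to zero. Justifying that assertion requires a uniform bound on $\Delta_f\phi=\Delta\phi-\nabla f\cdot\nabla\phi$ on the annulus, which is not free: on a shrinker $|\nabla f|$ grows linearly, so $\nabla f\cdot\nabla\phi$ is only bounded (one then argues via the tail of the convergent energy integral), and $\Delta\phi$ involves the distance Laplacian, which needs a comparison argument or an $f$-based cutoff to control near the cut locus. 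Your route --- $\phi^2$ cutoff, a single integration by parts using self-adjointness of $\Delta_f$ with respect to $e^{-f}dv_g$, the pointwise bound $|\nabla|\nabla u|^2|\leq 2|D^2u||\nabla u|$, and Young's inequality with constant $1$ to cancel the Hessian terms on both sides --- needs only $|\nabla\phi|\leq C/R$ and never touches second derivatives of the cutoff. The price is that you discard the Hessian information (the paper's argument concludes $\int_M|\nabla^2u|^2e^{-f}dv_g=0$ as well), but for the Liouville statement this costs nothing, since $\int_M|\nabla u|^2e^{-f}dv_g=0$ already forces $u$ to be constant. Your closing remarks are also accurate: the cancellation is legitimate because all integrals are over the compact support of $\phi$, and any fixed positive lower bound on $Ric_f$ would do in place of $\tfrac12$.
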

The proof of this result is given in section \ref{sect3}.

\begin{Pro}\label{wangjp2} Fix any $p\in M$. Assume that $(M,g)$ satisfies that $Ric_f\geq \frac12g$
 with $$|\nabla f(x)|\leq \alpha d(x,p)+b $$  for some unform constants $\alpha\geq 0$,$\lambda \geq0$, and $b>0$, and has non-negative scalar curvature, i.e., $R\geq 0$. Then for any harmonic function $u$ with finite energy
 $$
 \int_M|\nabla u|^2<\infty,
 $$ we have the integral inequality
$$
\int_M|\nabla^2 u|^2+\frac{1}{2}\int_M R|\nabla u|^2\leq \frac{n\lambda}{2}\int_M|\nabla u|^2.
$$
\end{Pro}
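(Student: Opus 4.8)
The plan is to apply the classical Bochner formula to the harmonic function $u$ and then trade the Ricci term for the Hessian of $f$ and the scalar curvature using the soliton structure. Since $\Delta u=0$, Bochner's formula reads $\frac12\Delta|\nabla u|^2=|\nabla^2u|^2+Ric(\nabla u,\nabla u)$. Writing the shrinker equation as $Ric=\lambda g-D^2f$ and tracing it to get $\Delta f=n\lambda-R$, I would substitute $Ric(\nabla u,\nabla u)=\lambda|\nabla u|^2-D^2f(\nabla u,\nabla u)$, so that $|\nabla^2u|^2=\frac12\Delta|\nabla u|^2-\lambda|\nabla u|^2+D^2f(\nabla u,\nabla u)$. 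The whole argument is then an exercise in integrating this pointwise identity against $\phi^2$, where $\phi$ is a standard cut-off with $\phi\equiv1$ on $B_p(R)$, $\mathrm{supp}\,\phi\subset B_p(2R)$ and $|\nabla\phi|\le C/R$, and in letting $R\to\infty$.

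The key computation is the integration by parts of the Hessian term. Using $\Delta u=0$, $\int_M\phi^2 D^2f(\nabla u,\nabla u)$ becomes $-\tfrac12\int_M\phi^2\langle\nabla f,\nabla|\nabla u|^2\rangle$ plus the cut-off error $-2\int_M\phi\langle\nabla f,\nabla u\rangle\langle\nabla\phi,\nabla u\rangle$, and a further integration by parts turns $-\tfrac12\int_M\phi^2\langle\nabla f,\nabla|\nabla u|^2\rangle$ into $\tfrac12\int_M\phi^2(\Delta f)|\nabla u|^2$ plus the error $\int_M\phi\langle\nabla\phi,\nabla f\rangle|\nabla u|^2$. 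Inserting $\Delta f=n\lambda-R$ produces exactly the main terms $\frac{n\lambda}2\int_M\phi^2|\nabla u|^2-\frac12\int_M\phi^2 R|\nabla u|^2$. Collecting everything gives $\int_M\phi^2|\nabla^2u|^2+\frac12\int_M\phi^2R|\nabla u|^2=\frac{(n-2)\lambda}2\int_M\phi^2|\nabla u|^2+E_R$, where $E_R$ gathers $\frac12\int_M\phi^2\Delta|\nabla u|^2$ and the two $\nabla f$-errors above. Since $\frac{(n-2)\lambda}2\le\frac{n\lambda}2$ and $R\ge0$, the stated inequality follows once I show $E_R\to0$.

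It remains to estimate $E_R$. The two errors containing $\nabla f$ are bounded, via Cauchy--Schwarz, by $C\sup_{\mathrm{supp}\,\nabla\phi}(|\nabla f|\,|\nabla\phi|)\int_{B_p(2R)\setminus B_p(R)}|\nabla u|^2$; here the hypothesis $|\nabla f|\le\alpha d(\cdot,p)+b$ together with $|\nabla\phi|\le C/R$ makes the prefactor $|\nabla f|\,|\nabla\phi|$ uniformly bounded on the annulus, so these errors are controlled by $\int_{B_p(2R)\setminus B_p(R)}|\nabla u|^2$, which tends to $0$ because $\int_M|\nabla u|^2<\infty$. This is precisely where the linear growth of $|\nabla f|$ enters, and it is the structural reason the estimate holds on a shrinker. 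For the remaining term $\frac12\int_M\phi^2\Delta|\nabla u|^2$ I integrate by parts once more to obtain $-\int_M\phi\,\nabla\phi\cdot\nabla|\nabla u|^2$ and use $\bigl|\nabla|\nabla u|^2\bigr|\le2|\nabla^2u|\,|\nabla u|$, so by Young's inequality it is bounded by $\epsilon\int_M\phi^2|\nabla^2u|^2+C_\epsilon\int_M|\nabla\phi|^2|\nabla u|^2$.

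The main obstacle is exactly this last term, since it contains the Hessian norm I am trying to bound. I would absorb $\epsilon\int_M\phi^2|\nabla^2u|^2$ into the left-hand side, which is legitimate for each fixed $R$ because $u$ is smooth and hence $\int_{B_p(2R)}|\nabla^2u|^2<\infty$; this costs a factor $(1-\epsilon)$ in front of the Hessian integral, while $C_\epsilon\int_M|\nabla\phi|^2|\nabla u|^2\le C_\epsilon R^{-2}\int_M|\nabla u|^2\to0$. Thus for each fixed $\epsilon$ I let $R\to\infty$, using monotone convergence for the nonnegative integrands $\phi^2|\nabla^2u|^2$ and $\phi^2R|\nabla u|^2$ and dominated convergence for $\phi^2|\nabla u|^2$, to get $(1-\epsilon)\int_M|\nabla^2u|^2+\frac12\int_M R|\nabla u|^2\le\frac{n\lambda}2\int_M|\nabla u|^2$; letting $\epsilon\to0$ finishes the proof and simultaneously yields $\int_M|\nabla^2u|^2<\infty$.
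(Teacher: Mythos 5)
Your proposal is correct and follows essentially the same route as the paper: Bochner's formula for the harmonic function, the trace identity $\Delta f=n\lambda-R$ from the soliton equation, integration against a cut-off $\phi^2$ with integration by parts on the Hessian term $\int_M\phi^2\,\nabla^2f(\nabla u,\nabla u)$, and the linear growth $|\nabla f|\le\alpha\, d(\cdot,p)+b$ together with finite energy to kill the error terms, arriving (as the paper's own final display does) at the stronger constant $\tfrac{(n-2)\lambda}{2}$. The only real differences are technical and in your favor: the paper disposes of $\tfrac12\int_M\phi^2\Delta|\nabla u|^2$ by integrating by parts twice, which silently requires a uniform bound on $\Delta\phi^2$ (second derivatives of the cut-off, hence Laplacian comparison), whereas your single integration by parts plus Young-type absorption of $\epsilon\int_M\phi^2|\nabla^2u|^2$ needs only $|\nabla\phi|\le C/R$; note also that both you and the paper genuinely use the soliton equality when tracing, since the stated hypothesis $Ric_f\ge\tfrac12 g$ alone yields $\Delta f\ge n\lambda-R$, which is the wrong direction for this step.
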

We now give a few remarks.

1. In \cite{N}, A.Naber proves that for the weighted smooth metric space $(M,g,f)$ satisfying $Ric_f\leq\frac12 g$ and $|Ric|\leq C$, there exists $\alpha>0$ such that if $\Delta_fu:=\Delta u-\nabla f.\nabla u=0$ on $M$ with $u(x)|\leq Aexp(\alpha d(x,p)^2$ for some $A>0$ and $p\in M$, then $u$ is a constant.

2. In \cite{MS}, Munteanu and Sesum prove that for the gradient shrinking Kaehler-Ricci soliton, if the harmonic function $u$ has finite energy, i.e., $\int_M|\nabla u|^2<\infty$, then $u$ is a constant. As a consequence of this result, they can show that such a manifold has at most one non-parabolic end (see \cite{MS} for the definition of non-parabolic end). In the earlier work \cite{MW3}, Munteanu and Wang have proved that on a  weighted smooth metric space $(M,g,f)$ satisfying $Ric_f\leq 0$ and $f$ is a bounded function, any sublinear growth $f$-harmonic function on $M$ must be a constant.

3. Some consequences of Proposition \ref{wangjp2} are given in section \ref{sect4}.

Here is the plan of the paper. In section \ref{sect1}, consider the mean curvature bounds of spheres centered at any point. We study
the volume comparison of unit balls in section \ref{sect2}. We  prove Theorem \ref{wangjp1} and Theorem \ref{wangjp1+} in section \ref{sect3}. In the last section we consider integral properties and prove Proposition \ref{wangjp2}.

\section{mean curvature bounds of spheres centered at any point}\label{sect1}

Note that the Ricci curvature lower bound gives us the upper bound for the Hessian matrix that  $D^2f\leq \frac12+\delta$ for some $\delta\geq 0$ on the Ricci shrinker  $(M,g,f)$ with the normalized condition
$$
Ric_f=\frac12 g.
$$
 Let $x\in M$.
We want to bound the mean curvature $m_x(s)$ of the sphere
$\partial B_x(s)$ of radius $s>0$ in term of $\delta$.
Our result is the following.

\begin{Pro} Assume that the weighted smooth metric space $(M,g,f)$ satisfying $Ric_f\geq\frac12 g$ and $D^2f\leq \frac12+\delta$ for some $\delta\geq 0$. For any $x\in M$, let $m(r)$ be the mean curvature of the sphere $\partial B_x(r)$ in $M$. Then
$$
m(r)\leq \frac{n-1}{r}+\frac{\delta r}{3}
$$
for any $r\geq 1/2$ and $Vol(B_x(r))\leq Cexp(\delta r^2)$ for some uniform constant $C>0$.
\end{Pro}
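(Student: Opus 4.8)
The plan is to reduce the statement to the classical mean-curvature comparison theorem by first extracting an honest (unweighted) Ricci lower bound from the two hypotheses. Since $Ric_f = Ric + D^2f$, the assumptions $Ric_f \geq \frac12 g$ and $D^2f \leq (\frac12+\delta)g$ combine to give
$$
Ric = Ric_f - D^2f \geq \frac12 g - \left(\frac12 + \delta\right)g = -\delta g
$$
on all of $M$; that is, the ordinary Ricci curvature is bounded below by $-\delta$, which is exactly the hypothesis governing the comparison geometry of geodesic spheres.

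Next I would fix $x \in M$ and a unit-speed minimizing geodesic $\gamma$ from $x$, and let $m(s)$ be the mean curvature of $\partial B_x(s)$ at $\gamma(s)$, defined up to the cut locus. Along $\gamma$ the shape operator $A$ of the geodesic spheres satisfies the matrix Riccati equation $A' + A^2 + R_\gamma = 0$; tracing it and using $\tr(A^2) \geq m^2/(n-1)$ together with $\tr R_\gamma = Ric(\gamma',\gamma') \geq -\delta$ gives the scalar inequality $m' + \frac{m^2}{n-1} - \delta \leq 0$. Comparing with the model ordinary differential equation then yields the standard bound $m(s) \leq (n-1)k\coth(ks)$ with $k = \sqrt{\delta/(n-1)}$, so that $(n-1)k^2 = \delta$. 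To reach the stated form I would invoke the elementary inequality $\coth t \leq \frac1t + \frac t3$ for $t > 0$, which follows from the Mittag--Leffler expansion $\coth t = \frac1t + \sum_{j\geq1}\frac{2t}{t^2 + j^2\pi^2}$ together with $\sum_{j\geq1} j^{-2} = \pi^2/6$; substituting $t = ks$ collapses the two terms to $\frac{n-1}{s}$ and $\frac{\delta s}{3}$, giving $m(s) \leq \frac{n-1}{s} + \frac{\delta s}{3}$ for all $s > 0$, in particular for $s \geq 1/2$.

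For the volume estimate I would pass to geodesic polar coordinates, writing $dv = J(s,\theta)\,ds\,d\theta$ and recalling that the radial logarithmic derivative of the Jacobian is the mean curvature, $\partial_s \log J(s,\theta) = m(s,\theta)$, with $J(s,\theta)/s^{n-1} \to 1$ as $s \to 0^+$. Integrating the pointwise bound on $m$ yields $J(s,\theta) \leq s^{n-1}\exp(\delta s^2/6)$, and integrating once more over $B_x(r)$ (the cut locus has measure zero and only improves the inequality) gives
$$
Vol(B_x(r)) \leq \omega_{n-1}\int_0^r s^{n-1}\exp\left(\frac{\delta s^2}{6}\right)ds \leq \frac{\omega_{n-1}}{n}\,r^n\exp\left(\frac{\delta r^2}{6}\right) \leq C\exp(\delta r^2),
$$
where the polynomial factor $r^n$ is absorbed into the exponential at the cost of a constant $C = C(n,\delta)$ independent of $x$.

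The Riccati computation and the polar-coordinate integration are routine. The points that need care are the verification of the scalar inequality $\coth t \leq \frac1t + \frac t3$ (supplied by the series above) and the measure-theoretic handling of the cut locus when integrating the Jacobian bound. I expect the only real obstacle to be bookkeeping: arranging that the constants in the comparison collapse exactly to $\frac{n-1}{s}$ and $\frac{\delta s}{3}$, and checking that the polynomial factor in the volume bound is genuinely dominated by $\exp(\delta r^2)$ uniformly in $x$.
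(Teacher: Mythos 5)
Your proof is correct, but it takes a genuinely different route from the paper's. You collapse the two hypotheses at the very start into an unweighted Ricci lower bound $Ric = Ric_f - D^2f \geq -\delta g$, and then invoke the classical mean-curvature comparison $m(s) \leq (n-1)k\coth(ks)$ with $k=\sqrt{\delta/(n-1)}$, finishing with the elementary inequality $\coth t \leq \frac{1}{t}+\frac{t}{3}$ (your Mittag--Leffler justification of this is valid, and the constants do collapse exactly to $\frac{n-1}{s}+\frac{\delta s}{3}$). The paper instead keeps the weighted structure alive: it writes the Riccati inequality as $m' + \frac{1}{n-1}m^2 \leq -\frac12 + f''(r)$ along radial geodesics, multiplies by $r^k$, integrates by parts, completes the square, and only at the last step inserts the pointwise bound $f'' \leq \frac12+\delta$, choosing $k=2$ to land on the same estimate. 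With a pointwise Hessian bound the two are logically equivalent (the paper's intermediate inequality plus $f''\leq\frac12+\delta$ is exactly your $m'+\frac{m^2}{n-1}\leq\delta$), so your argument is the more economical one and in fact yields the bound for all $r>0$ rather than only $r\geq 1/2$; what the paper's $r^k$-multiplier technique buys is flexibility, since it is the Munteanu--Wang/Cao--Zhou style argument that still functions when one controls only $\int_0^t f''(r)r^k\,dr$ (e.g., after trading $f''$ for $f'$ or $f$ by parts) rather than $f''$ pointwise. One shared loose end: both you and the paper absorb a polynomial factor ($r^n$ for you, $r^{n-1}$ for the paper) into $C\exp(\delta r^2)$, which is only legitimate when $\delta>0$ with $C$ depending on $\delta$ (and fails at $\delta=0$, where the correct conclusion is polynomial volume growth); you are actually more explicit about this caveat than the paper is, so this is a defect of the statement rather than of your argument.
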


\begin{proof}
Take any point $x\in M$ and express the volume form in the geodesic polar coordinates centered at $x$ as
$$
dV|_{exp_x(r\xi)}=J(x,r,\xi)drd\xi
$$
for $r>0$ and $\xi\in S_xM$, a unit tangent vector at $x$. For any $y\in M$, we let $R=d(y,x)$ and omit the dependence of the geometric quantities on $\xi$.
  We may assume that $R=d(y,x)>1$ and let $\gamma(s)$ be the minimizing geodesic starting from $x$ such that $\gamma(0)=x$ and $\gamma(T)=y$, where $T\in [R-1,R]$. Recall that along the minimizing geodesic curve $\gamma(r)$,
 $$
 m'(r)+\frac{1}{n-1}m^2(r)+Ric(\partial_r,\partial_r)\leq 0,
 $$
where $m=m(r)=\frac{d}{dr}(\log J)(r)$.
Using the Ricci soliton equation $Ric_f=\frac{1}{2}$ we immediately obtain that
$$
 m'(r)+\frac{1}{n-1}m^2(r)\leq -\frac{1}{2}+f''(r).
 $$

We now use test function to give a upper bound of $m(r)$. For any $k\geq 2$, multiplying the above differential inequality by $r^k$
 and integrating from $r=0$ to $r=t$, we obtain
 $$
 \int_0^t m'(r)r^kdr+ \int_0^t\frac{1}{n-1}m^2(r)r^kdr\leq -\frac{t^{k+1}}{2(k+1)}+ \int_0^tf''(r)r^kdr.
 $$

 Integrating the first term by part and making square for the second term, we have
 $$
 m(t)t^k+ \frac{1}{n-1}\int_0^t(m(r)r^{k/2}-(n-1)\frac{k}{2}r^{\frac k2-1} )^2dr
 $$
 $$\leq \frac{(n-1)k^2t^{k-1}}{4(k-1)}-\frac{t^{k+1}}{2(k+1)}+ \int_0^tf''(r)r^kdr.
 $$
 This implies that
 $$
 m(t)\leq \frac{(n-1)k^2}{4(k-1)t}-\frac{t}{2(k+1)}+ \frac{1}{t^k}\int_0^tf''(r)r^kdr.
 $$

 Using the assumption about the Hessian of $f$, we know that
 $f''(r)\leq \frac12+\delta$. By direct computation, we have
  $$
  m(t)\leq \frac{(n-1)k^2}{4(k-1)t}-\frac{t}{2(k+1)}+(\frac12+\delta)\frac{t}{k+1}.
  $$
  Choose $k=2$, we get
  $$
  m(t)\leq \frac{n-1}{t}+\frac{\delta t}{3},
  $$
  which give a upper bound in terms of the radius of the sphere. Integrating, for $t>1$, we have
  $$
  J(x,r,\xi)\leq C exp(\frac{\delta r^2}{6}),
  $$
  which implies that $Vol(B_x(r))\leq Cexp(\delta r^2)$ as we wanted.
\end{proof}

\section{volume comparison of unit balls}\label{sect2}
In section we prove Theorem \ref{wangjp}. We use the idea similar to the proof of Theorem 2.3 in \cite{MW1}.

We now give a proof of improved volume comparison of unit balls on the weighted Riemannian manifold of shrinking type.

\begin{proof} (of Theorem \ref{wangjp}).

Again, we take any point $x\in M$ and express the volume form in the geodesic polar coordinates centered at $x$ as
$$
dV|_{exp_x(r\xi)}=J(x,r,\xi)drd\xi
$$
for $r>0$ and $\xi\in S_xM$, a unit tangent vector at $x$. We let $R=d(p,x)$ and omit the dependence of the geometric quantities on $\xi$.
 Let $R=d(p,x)>1$ and let $\gamma(s)$ be the minimizing geodesic starting from $x$ such that $\gamma(0)=x$ and $\gamma(T)\in B_p(1)$ with $T\in [R-1,R+1]$. It is well-known that along the minimizing geodesic curve $\gamma$,
 $$
 m'(r)+\frac{1}{n-1}m^2(r)+Ric(\partial_r,\partial_r)\leq 0,
 $$
where $m=m(r)=\frac{d}{dr}(\log J)(r)$.
Using the Ricci soliton equation $Ric_f=\frac{1}{2}$ we immediately obtain that
$$
 m'(r)+\frac{1}{n-1}m^2(r)\leq -\frac{1}{2}+f''(r).
 $$
 Integrating this relation we get for $T\in {R-1,R-1}$, $s\in [1/2,1]$,
 $$
 m(T)+\frac{1}{n-1}\int_1^Tm^2(r)dr\leq -\frac{T-1}{2}+f'(T)-f'(1)+m(1).
 $$
Recall the following well-known fact that for $R>R_0>0$ very large, we have
$$
\frac{1}{2}(R-1)-c\leq f'(1)\leq \frac{1}{2}(R-1)+c
$$
and $|f'(T)|\leq c$ since $\gamma(T)\in B_p(1)$.
Here and everywhere in the proofs, $c$ and $R_0$ denote constants depending only on the dimension $n$ and $f(p)$.

Standard argument shows that there is a uniform constant $c_0>0$ such that $m(s)\leq c_0$ for $s\in [1/2,1]$. Then we have
$$
m(t)+\frac{1}{n-1}\int_1^tm^2(r)dr\leq c_0.
$$
By the Cauchy-Schwartz inequality we obtain that
\begin{equation}\label{mean}
m(T)+\frac{1}{(n-1)T}(\int_1^Tm(r)dr)^2<c,
\end{equation}
for $c>c_0$.

\textbf{Claim}: For any $r>1$,
\begin{equation}\label{mean1}
\int_1^Tm(r)dr\leq \sqrt{c(n-1)r}.
\end{equation}
In fact, let
$$
v(t)=\sqrt{c(n-1)t}-\int_1^tm(r)dr.
$$
Then
$$
v'(t)=\frac{\sqrt{c(n-1)}}{2\sqrt{r}}-m(t).
$$

Clearly $v(1)>0$ by choosing $c>c_0$. Suppose that $v$ is negative somewhere for $t>1$. Let $R>1$ be the first zero point of $v$, i.e.,
$v(R)=0$. Then by the choice of $R$, we have $v'(R)\leq 0$. That is,
$$
\int_1^Rm(r)dr=\sqrt{c(n-1)R}
$$
and
$$
m(R)\geq \frac{\sqrt{c(n-1)}}{2\sqrt{R}}.
$$
By direct computation we know that
$$
m(R)+\frac{1}{(n-1)R}(\int_1^Rm(r)dr)^2\geq \frac{\sqrt{c(n-1)}}{2\sqrt{R}}+c,
$$
which is a contradiction with (\ref{mean}).

The relation (\ref{mean1}) implies that
$$
\log J(x,T,\xi)/J(x,1,\xi)\leq \sqrt{c(n-1)T},
$$
and we have
$$
J(x,1,\xi)\geq exp(-\sqrt{c(n-1)R}) J(x,T,\xi).
$$
Integrating over the unit tangent vectors $\xi$ we get
$$
Area(\partial B_x(1)\geq exp(-\sqrt{c(n-1)R})Vol(B_p(1)),
$$
where $R=d(p,x)>R_0$. Similarly we have
$$
Area(\partial B_x(s)\geq exp(-\sqrt{c(n-1)R})Vol(B_p(1)),
$$
for any $s\in [1/2,1]$. Hence we have
$$
Vol(B_x(1)\geq exp(-\sqrt{c(n-1)R})Vol(B_p(1)).
$$
This is the desired result.
\end{proof}

\section{local gradient estimate for f-harmonic functions on Ricci shrinkers}\label{sect3}

We  prove Theorem \ref{wangjp1} and Theorem \ref{wangjp1+} in this section.

\begin{proof} (of Theorem \ref{wangjp1})
  Let $(M,g,f)$ be a complete noncompact Ricci shrinker such that $Ric_f=\frac12 g$. Define the drifting Laplacian by
  $$
  \Delta_f u=\Delta u-\nabla f.\nabla u.
  $$
Assume that $u>0$ be a f-harmonic function on $M$, i.e., $\Delta_f u=0$ on $M$.
Let $v=\log u$. Then
$$
v_j=\frac{u_j}{u}, \ \ \ v_{ij}=\frac{u_{ij}}{u}-|\nabla v|^2.
$$
Then
$$
\Delta_f v=-|\nabla v|^2.
$$
Recall that
$$
\frac{1}{2}\Delta_f |\nabla v|^2=|\nabla^2 v|^2+(\nabla v,\nabla \Delta_f v)+Ric_f(\nabla u,\nabla u).
$$
and
$$
|\nabla^2 v|^2\geq \frac1n (\Delta v)^2,
$$
Then we have
$$
\frac{1}{2}\Delta_f |\nabla v|^2\geq \frac1n (\Delta v)^2-(\nabla v,\nabla |\nabla v|^2)+\frac12|\nabla v|^2,
$$
which implies that
$$
\frac{1}{2}\Delta_f |\nabla v|^2\geq \frac1n (|\nabla v|^2-\nabla f.\nabla v)^2-(\nabla v,\nabla |\nabla v|^2)+\frac12|\nabla v|^2
$$
Fix $\epsilon>0$ small. Recall that
$$
(a-b)^2\geq \frac{1-\epsilon}{1+\epsilon}a^2-\frac 1\epsilon b^2.
$$
Then we have
\begin{equation}\label{bochner3}
\frac{1}{2}\Delta_f |\nabla v|^2\geq \frac{1-\epsilon}{n(1+\epsilon)}|\nabla v|^4-\frac{1}{n\epsilon}|\nabla f|^2|\nabla v|^2+\frac12|\nabla v|^2-(\nabla v,\nabla |\nabla v|^2).
\end{equation}

Let $\phi$ be a cut-off function on $[-2,2]$. Let $\eta=\phi(\sqrt{f}/R)$ for any $R>1$.
Note that
$$
\Delta_f\eta=\Delta\eta-\nabla f.\nabla \eta=\Delta\eta-\nabla f.\nabla\sqrt{f}\frac{\eta'}{R},
$$
where $\frac{|\nabla f|}{R}\leq C$ and $|\nabla \sqrt{f}|\leq 1$.

Define $Q=\eta |\nabla v|^2$.
Notice that
$$
|\nabla f.\nabla \eta|\leq C, \ \ \  (\Delta_f\eta-\frac{2|\nabla \eta|^2}{\eta})\geq -C.
$$
At the maximum point $x_0$ of $Q$, we have
$$
\Delta_f Q\leq 0, \ \ \nabla Q=0.
$$
Note that at $x_0$,
$$
\nabla\eta |\nabla v|^2=-\eta\nabla |\nabla v|^2,
$$
and by
$$
0\geq \Delta_f Q=\Delta_f\eta.|\nabla v|^2 +2\nabla |\nabla v|^2.\nabla \eta +\eta\Delta_f |\nabla v|^2,
$$
we have
$$
(\Delta_f\eta-\frac{2|\nabla \eta|^2}{\eta})|\nabla v|^2+\eta \Delta_f |\nabla v|^2\leq 0.
$$
Write by $C(\eta)=\frac12£¨\Delta_f\eta-\frac{2|\nabla \eta|^2}{\eta}£©$ and $C_\epsilon=\frac{1-\epsilon}{n(1+\epsilon)}$. Then by (\ref{bochner3}) we have
$$
C(\eta)|\nabla v|^2+\eta[C_\epsilon|\nabla v|^4-\frac{1}{n\epsilon}|\nabla f|^2|\nabla v|^2+\frac12|\nabla v|^2
-(\nabla v,\nabla |\nabla v|^2)]\leq 0.
$$
Then we have
$$
C(\eta)Q+C_\epsilon Q^2-\frac{1}{n\epsilon}|\nabla f|^2Q+\frac12 Q+ \nabla v.\nabla\eta Q\leq 0.
$$
Note that
$$
|\nabla v.\nabla\eta|Q\leq (\frac{C_\epsilon}{2}Q+ \frac{1}{2C_\epsilon}\frac{|\nabla\eta|^2 }{\eta})Q.
$$
Then we have
$$
\frac12C_\epsilon Q+\frac12\leq\frac{1}{n\epsilon}|\nabla f|^2 -C(\eta).
$$
By this we get at the maximum point $x_0$,
$$
Q\leq C_\epsilon R^2
$$
for $R>1$. Hence we get that on $B_R$, $|\nabla v|\leq C_\epsilon R $. This implies that
$$
|\nabla v|(x)\leq Cd(x,p).
$$
for $d(x,p)>1$. Hence, we have the gradient estimate for $u>0$ on $M$,
$$
\frac{|\nabla u|}{u}(x)\leq C(d(x,p)+1).
$$
Take any minimizing curve $\gamma(s)$ from $p$ to $x$, we integrate along $\gamma$ to get
$$
\frac{u(x)}{u(p)}\leq Cexp(Cd(x,p)^2).
$$
This completes the proof of Theorem \ref{wangjp1}.
\end{proof}

 We hope that we can use the Cacciopolli argument (see the proof of Proposition 8.1 in Naber's paper \cite{N} and the use of Lemma 2.2 is replaced by Proposition 4.2 in \cite{MW2} ) to conclude that with some decay assumption such that finite energy, a $f$ harmonic function
$u$ is a constant function on $M$. However, we have a simpler proof of this result below.

\begin{proof}(of Theorem \ref{wangjp1+}). Recall that the Bochner formula for the harmonic function $u:M\to R$,
$$
\frac{1}{2}\Delta_f |\nabla u|^2=|\nabla^2 u|^2+Rc_f(\nabla u,\nabla u).
$$
By our assumption that $Ric_f\geq\frac12 g$, we know that
$$
\frac{1}{2}\Delta_f |\nabla u|^2\geq |\nabla^2 u|^2+\frac{1}{2}|\nabla u|^2.
$$
Let $\phi$ be the standard cut-off function on $B_p(2R)$ and let $dm=exp(-f)dv_g$. Then we have
$$
\int_M(|\nabla^2 u|^2+\frac{1}{2}|\nabla u|^2)\phi dm\leq \int_M(\frac{1}{2}\Delta_f\phi) |\nabla u|^2dm.
$$
The right side is going to zero as $R\to\infty$. Hence we have
$$
\int_M(|\nabla^2 u|^2+\frac{1}{2}|\nabla u|^2) dm=0,
$$
which implies that $u$ is a constant.
\end{proof}

We now consider the volume growth of geodesic balls in manifolds with density and we show that for $(M,g,e^{-f}dv)$ being a complete smooth metric measure space of dimension $n$ with $Ric_f\geq \frac12$, $|\nabla f|\leq f$, and also with both Ricci curvature bound above and $\Delta f$ bounded from above, the volume growth of geodesic balls is in polynomial order.

\begin{Pro}
Let $(M,g,e^{-f}dv)$ be a complete smooth metric measure space of dimension $n$. Assume that
$Ric_f\geq \frac12$, $|\nabla f|\leq f$. Assume further that $\Delta f\leq K$ and $Ric\leq K$ for some constant $K>0$. Then for any $p\in M$, the volume growth
of geodesic balls $B_p(r)$ are of polynomial order,i.e., there is a uniform constant $C>0$ such that
$$
V(r)\leq Cr^{2K}
.$$
\end{Pro}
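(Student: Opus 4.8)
The plan is to reduce the statement to a first-order differential inequality relating the sphere area $A(r)=V'(r)$ and the volume $V(r)=\mathrm{Vol}(B_p(r))$, and then to extract the polynomial bound by an ODE comparison in which the exponent $2K$ appears through the balance between the drift $-\tfrac r2$ coming from $Ric_f\ge\tfrac12$ and the Laplacian bound $\Delta f\le K$. Thus the whole argument is organized around the single estimate $A'(r)\le(C_0-\tfrac r2)A(r)+KV(r)$.

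First I would record the weighted Laplacian comparison. Along a minimizing geodesic from $p$ the weighted mean curvature $m_f(r)=\Delta_f r=\Delta r-\langle\nabla f,\nabla r\rangle$ obeys the Riccati inequality $m_f'(r)\le-\tfrac1{n-1}(\Delta r)^2-Ric_f(\partial_r,\partial_r)$, so $Ric_f\ge\tfrac12$ gives $m_f'(r)\le-\tfrac12$ and hence $\Delta_f r\le C_0-\tfrac r2$ for $r\ge r_0$, with $C_0$ a uniform constant. Writing $\Delta r=\Delta_f r+\langle\nabla f,\nabla r\rangle$ and integrating over $\partial B_p(r)$ via the first variation of area $A'(r)\le\int_{\partial B_p(r)}\Delta r\,dA$ (the cut locus only improves the inequality, by the favorable sign of its singular contribution), the drift term $\int_{\partial B_p(r)}\langle\nabla f,\nu\rangle\,dA$ is exactly the flux of $\nabla f$, so the divergence theorem turns it into $\int_{B_p(r)}\Delta f\,dv\le KV(r)$. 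This produces the key inequality $A'(r)\le(C_0-\tfrac r2)A(r)+KV(r)$ for large $r$. Here the auxiliary hypotheses $|\nabla f|\le f$ and $Ric\le K$ enter to guarantee that the flux integrals are finite and that $C_0$ together with the cut-locus terms are uniformly controlled.

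Finally I would run the ODE argument. Setting $h=(\log V)'=A/V>0$ and discarding the nonnegative $h^2$ term, the key inequality becomes $h'\le(C_0-\tfrac r2)h+K$; multiplying by the integrating factor $\mu(r)=\exp(\tfrac{r^2}4-C_0r)$ gives $(\mu h)'\le K\mu$, whence $h(r)\le\mu(r)^{-1}\mu(r_1)h(r_1)+K\mu(r)^{-1}\int_{r_1}^r\mu(s)\,ds$. A Laplace/Watson-type estimate $\int_{r_1}^r\mu\le(\tfrac r2-C_0)^{-1}\mu(r)(1+o(1))$ then forces $h(r)\le\tfrac{2K}{r}(1+o(1))$, and integrating $\log V$ yields $V(r)\le Cr^{2K}$. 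The exponent $2K$ is produced precisely by the equilibrium balance $K=\tfrac r2\,h$ of the majorized ODE. I expect the main obstacle to be twofold: making the comparison and the area first-variation rigorous across the cut locus (standard, but requiring the sign of the singular part), and carrying out the sharp asymptotics of the integrating factor so that the drift $-\tfrac r2$ upgrades the naive exponential bound to the stated polynomial one with the correct power $2K$.
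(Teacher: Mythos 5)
Your argument is correct in outline, but it takes a genuinely different route from the paper's. The paper follows the Cao--Zhou strategy: it first invokes the potential estimates $(\tfrac12 d(x,p)-a)^2\le f(x)\le(\tfrac12 d(x,p)+a)^2$ (Munteanu--Wang, using $Ric_f\ge\tfrac12$ and the gradient bound on $f$), then uses the second variation of arclength together with $Ric\le K$ to obtain the lower bound $|\nabla f|(x)\ge\tfrac12 d(x,p)-c$, and then works not with geodesic balls but with the sublevel sets $D(r)=\{2\sqrt f\le r\}$ of the potential: the coarea formula gives $V'(r)=\tfrac r2\int_{\partial D(r)}|\nabla f|^{-1}\,dA$, while the divergence theorem and $\Delta f\le K$ give $2KV(r)\ge 2\int_{\partial D(r)}|\nabla f|\,dA\ge (r-c)\,V'(r)$, so the logarithmic-derivative inequality $V'/V\le 2K/(r-c)$, and hence the exponent $2K$, appears immediately with no ODE asymptotics; moreover, working on smooth level sets of $f$ sidesteps all cut-locus issues. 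You instead stay on geodesic balls, use the weighted Laplacian comparison $\Delta_f r\le C_0-\tfrac r2$ (which needs only $Ric_f\ge\tfrac12$ plus an initial bound at some $r_0$ below the injectivity radius at $p$), convert the drift term into $\int_{B_p(r)}\Delta f\le KV(r)$ by the divergence theorem, and extract the exponent by an integrating factor. Two points deserve attention. First, your accounting of the hypotheses is inaccurate: $Ric\le K$ and $|\nabla f|\le f$ are what the \emph{paper} needs (for the lower bound on $|\nabla f|$ and for the potential estimates, respectively); your argument never actually uses either of them, so, carried out carefully, it proves a stronger statement in which only $Ric_f\ge\tfrac12$ and $\Delta f\le K$ enter --- consistent, e.g., with the Gaussian shrinker, where $K=n/2$ and $V(r)\sim r^n$. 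Second, your final asymptotic step must be quantified: the bound $h(r)\le\tfrac{2K}{r}(1+o(1))$ as literally stated only integrates to $V(r)\le Cr^{2K+o(1)}$, not $Cr^{2K}$. The fix is within your own computation: show $\int_{r_1}^r\mu(s)\,ds\le\mu(r)/(\tfrac r2-C_0-1)$ for $r\ge r_1$ large (by checking that the difference has positive derivative), which yields $h(r)\le 2K/(r-2C_0-2)+O(e^{-r^2/8})$; the error term is integrable, so integrating $\log V$ gives exactly the polynomial bound $V(r)\le Cr^{2K}$.
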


\begin{proof}
Recall that under the conditions $Ric_f\geq \frac12$ and $|\nabla f|^2\leq f$, there are two constants $r_0>0$ and $a$ depending only on $n$ and $f(p)$ such that
\begin{equation}\label{cao1}
(\frac12 d(x,p)-a)^2\leq f(x)\leq (\frac12 d(x,p)+a)^2.
\end{equation}
This is from Proposition 4.2 in the interesting paper \cite{MW1}. By this we know that
$|\nabla f(x)|\leq \frac12 d(x,p)+a$. We may assume that $d(x,p)>2$. Consider any minimizing normal geodei=sic $\gamma(s)$, $0\leq s\leq r:=d(x,p)$ starting from $\gamma(0)=p$ to
$\gamma(r)=x$. Let $X=\dot{\gamma(s)}$. By the second variation formula of arc length we know that
$$
\int_0^r\phi^2Ric(X,X)ds\leq (n-1)\int_0^r|\dot{\phi}(s)|^2ds
$$
for any $\phi\in C_0^{1-}([0,r])$. Let $\phi(s)=s$ on $[0,1]$, $\phi(s)=r-s$ on $[r-1,r]$, and $\phi(s)=1$ on $[1,r-1]$.
Then we have
$$
\int_0^rRic(X,X)ds=\int \int_0^r\phi^2Ric(X,X)ds+\int_0^r(1-\phi^2)Ric(X,X)ds.
$$
Then we derive using $Ric\leq K$ (similar the proof before (2.8) in \cite{CZ}), we have
$$
\int_0^rRic(X,X)ds\leq 2(n-1)+2K.
$$
Since
$$
\nabla_X\dot{f}=\nabla^2f(X,X)\geq \frac12-Ric(X,X),
$$
Integrating it from $0$ to $r$, we get
\begin{equation}\label{cao2}
\dot{f}(r)=\frac12r-\int_0^2Ric(X,X)ds\geq \frac12 r-c
\end{equation}
for some constant $c$ depending only on $K$, $n$ and $f(p)$. Hence,
$$|\nabla f|(x)\geq \dot{f}(r)\geq \frac12 d(x,p)-c.$$

Define
$$
\rho(x)=2\sqrt{f(x)}.
$$
Then,
$$
|\nabla \rho|=\frac{|\nabla f|}{\sqrt{f}}\leq 1.
$$
Let for $r>0$ large,
$$
D(r)=\{x\in M; \rho(x)\leq r\}, \ \ V(r)=Vol(D(r)).
$$
As in \cite{CZ}, by the co-area formula we have
$$
V(r)=\int_0^rds\int_{\partial D(r)}\frac{1}{|\nabla \rho|}dA
$$
and
$$
V'(r)=\int_{\partial D(r)}\frac{1}{|\nabla \rho|}dA=\frac{r}{2}\int_{\partial D(r)}\frac{1}{|\nabla f|}dA.
$$
By the divergence theorem we have
$$
2KV(r)\geq 2\int_{D(r)}\Delta f=2\int_{\partial D(r)}|\nabla f|dA.
$$
By (\ref{cao2}) we know that on $\partial D(r)$, there is a constant $C>2$ such that for $r\geq 2C$,
$$
|\nabla f|^2\geq f-C.
$$
Then we have
$$
2\int_{\partial D(r)}|\nabla f|dA\geq  2\int_{\partial D(r)}\frac{f-C}{|\nabla f|}dA,
$$
The right side of above inequality is
$$
\geq (r-2)V'(r).
$$
Hence we have
$$
2KV(r)\geq (r-2)V'(r),
$$
which then implies that
$$
V(r)\leq V(2C)r^{2K}
$$
for $r>2C$.
\end{proof}
We remark that the above argument is motivated from the proof of the volume growth estimate in \cite{CZ}.

\section{finite energy harmonic functions on steady Ricci solitons}\label{sect4}

Let $(M,g)$ be a complete non-compact Riemannian manifold of dimension $n$.
Fix $p\in M$. In this section we always assume that $(M,g)$ satisfies
$Ric_f\geq \lambda g$ for some constant $\lambda\geq 0$ with nonnegative scalar curvature, i.e., $R\geq 0$ and $|\nabla f|\leq\alpha d(x,p)+b$. Then we have $R+\Delta f\geq n\lambda$ on $M$.
We study the $L^2$ estimate for hessian matrix for harmonic functions with finite energy.

\begin{proof} (of Proposition \ref{wangjp2}).
Let $u:M\to R$ be a harmonic function on $(M,g,f)$ with finite energy
$$
\int_M|\nabla u|^2<\infty.
$$
Recall that the Bochner formula for the harmonic function $u:M\to R$,
$$
\frac{1}{2}\Delta |\nabla u|^2=|\nabla^2 u|^2+Rc(\nabla u,\nabla u).
$$
Using the assumption $Ric_f\geq \lambda g$ we have
\begin{equation}\label{bochner}
\frac{1}{2}\Delta |\nabla u|^2\geq |\nabla^2 u|^2+\lambda|\nabla u|^2-\nabla^2f(\nabla u,\nabla u).
\end{equation}
Recall that the Hessian matrix $\nabla^2f=(f_{ij})$ in local coordinates $(x_i)$ in $M$.

Let $\phi=\phi_R$ be the cut-off function on $B_{2R}(p)$. We write by $o(1)$ the quantities such that $o(1)\to 0$ as $R\to\infty$.
Then, we have
$$
\int_M(|\nabla^2 u|^2+\lambda|\nabla u|^2)\phi^2=\int_M(\frac{1}{2}\Delta |\nabla u|^2+\nabla^2f(\nabla u,\nabla u))\phi^2.
$$
By direct computation we have
$$
\int_M(\frac{1}{2}\Delta |\nabla u|^2)\phi^2=\int_M(\frac{1}{2} |\nabla u|^2)\Delta\phi^2=o(1)
$$
and
$$
\int_Mf_{ij}u_iu_j\phi^2=\frac{1}{2}\int_M \Delta f |\nabla u|^2\phi^2-\int f_i\phi_i|\nabla u|^2\phi+o(1).
$$
Then we have
$$
\int_Mf_{ij}u_iu_j\phi^2=\frac{1}{2}\int_M (n\lambda -R) |\nabla u|^2\phi^2+o(1).
$$
Hence by (\ref{bochner}) we have
$$
\int_M(|\nabla^2 u|^2+\lambda|\nabla u|^2)\phi^2+\frac{1}{2}\int_M R|\nabla u|^2\phi^2\leq \frac{n\lambda}{2}\int_M|\nabla u|^2\phi^2+o(1).
$$
Sending $R\to\infty$ we obtain that
\begin{equation}\label{bochner2}
\int_M|\nabla^2 u|^2+\frac{1}{2}\int_M R|\nabla u|^2\leq \frac{\lambda(n-2)}{2}\int_M|\nabla u|^2.
\end{equation}
\end{proof}

We now give application of this integral inequality. The following result is well-known, but we include it by a direct application of our integral inequality.

\begin{Pro} When $Ric_f=\lambda g$ with $\lambda>0$ and $n=2$. If $u$ is a finite energy harmonic function on $M$, then it is a constant.
\end{Pro}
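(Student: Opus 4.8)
The plan is to deduce the statement directly from the integral inequality (\ref{bochner2}) by specializing the dimension to $n=2$. First I would check that a gradient shrinking soliton $Ric_f=\lambda g$ with $\lambda>0$ meets all the hypotheses of Proposition \ref{wangjp2}: its potential grows quadratically (the Cao--Zhou type estimate, in the spirit of (\ref{cao1})), so $|\nabla f|(x)\leq \alpha d(x,p)+b$ holds with suitable constants, and the scalar curvature of such a shrinker is nonnegative, $R\geq 0$ (which is in any case the standing assumption of this section). With these in place, inequality (\ref{bochner2}) applies to any finite energy harmonic function $u$ on $M$.

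Next I would set $n=2$ in (\ref{bochner2}). The right-hand side is $\frac{\lambda(n-2)}{2}\int_M|\nabla u|^2$, which vanishes identically when $n=2$, leaving
$$
\int_M|\nabla^2 u|^2+\frac{1}{2}\int_M R|\nabla u|^2\leq 0.
$$
Since $R\geq 0$, both integrands are nonnegative, so each term must be zero. In particular $\nabla^2 u\equiv 0$ on $M$, that is, $\nabla u$ is a parallel vector field.

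From $\nabla^2 u\equiv 0$ I would note that $\nabla|\nabla u|^2=2\nabla^2 u(\nabla u,\cdot)=0$, so $|\nabla u|\equiv c$ is constant on $M$. It then remains to rule out $c>0$. If $c>0$, then $\nabla u$ is a nowhere vanishing parallel field, so by the de Rham splitting $(M,g)$ splits isometrically as a product $\mathbb{R}\times N$ along the flow lines of $\nabla u/c$; in particular $M$ has infinite volume. (One may alternatively invoke the linear volume lower bound for complete noncompact shrinkers from \cite{CZ} to get $Vol(M)=\infty$ directly.) Then $\int_M|\nabla u|^2=c^2\,Vol(M)=\infty$, contradicting the finite energy hypothesis. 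Hence $c=0$ and $u$ is constant.

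The step I expect to be the main obstacle is this last one: the Bochner argument only forces $u$ to be affine, i.e. $|\nabla u|$ constant, and excluding a genuine linear function requires the global input that the soliton is noncompact of infinite volume (via the splitting theorem or the Cao--Zhou estimate), rather than any local computation. I would also flag that the decisive constant must be the $\frac{\lambda(n-2)}{2}$ of (\ref{bochner2}) and not the $\frac{n\lambda}{2}$ written in the statement of Proposition \ref{wangjp2}, since it is precisely the vanishing of the factor $n-2$ that drives the whole argument.
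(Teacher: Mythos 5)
Your proof is correct, but the decisive final step is genuinely different from the paper's. Both arguments begin the same way: apply the integral inequality (\ref{bochner2}) with $n=2$, so that the factor $\frac{\lambda(n-2)}{2}$ annihilates the right-hand side and forces $\nabla^2u\equiv 0$ and $R|\nabla u|^2\equiv 0$ pointwise; your flag about the constant is well taken, since the $\frac{n\lambda}{2}$ appearing in the statement of Proposition \ref{wangjp2} would leave $\lambda\int_M|\nabla u|^2$ on the right when $n=2$ and the argument would collapse, so it is indeed the sharper form (\ref{bochner2}) that must be invoked. From this point the paper uses precisely the term you discard: it runs a dichotomy on the scalar curvature (for a shrinker either $R>0$ everywhere or $R\equiv 0$, by the strong maximum principle applied to $\Delta_f R$); if $R>0$ then $R|\nabla u|^2\equiv 0$ gives $\nabla u\equiv 0$ at once, while if $R\equiv 0$ then in dimension two $Ric\equiv 0$, so $\nabla^2 f=\lambda g$ identifies $(M,g)$ with the Gaussian soliton on $\mathbb{R}^2$, and the classical Liouville theorem for finite Dirichlet energy harmonic functions on the flat plane finishes. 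You instead use only $\nabla^2u\equiv 0$: the gradient is parallel, hence $|\nabla u|\equiv c$, and $c>0$ is excluded because a globally defined function with nowhere vanishing parallel gradient splits $M$ isometrically as $\mathbb{R}\times N$ (here the splitting is elementary, no passage to the universal cover is needed since $\nabla u$ is exact), so $M$ has infinite volume and $\int_M|\nabla u|^2=c^2\,Vol(M)=\infty$ contradicts finite energy. Your route is more robust: it avoids the two-dimensional identity $Ric=\frac{R}{2}g$, the maximum-principle dichotomy, and the classification of flat shrinkers, and it proves the same statement in any dimension in which the right-hand side of (\ref{bochner2}) vanishes. What the paper's route buys is that it stays entirely inside the integral inequality (using the $R$-term, in the same spirit as the subsequent Proposition \ref{wang16}) and needs no splitting theorem or volume growth input; one small caveat on your alternative justification of infinite volume is that the linear volume lower bound for noncompact shrinkers is usually attributed to Munteanu--Wang rather than to \cite{CZ} (whose main volume result is the Euclidean upper bound), but since your splitting argument already yields infinite volume this does not affect the proof.
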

\begin{proof} By the integral inequality, we have $\nabla^2u=0$ and $R|\nabla u|^2=0$. Then either $R=0$ or $R>0$ and $u$ is a constant function on $M$.
If $R=0$, then by $\nabla^2f=\lambda g$, we know that $(M,g)$ is a warped product and it is the Gaussian soliton on $R^2$. In this case by the Liouville theorem we know that $u$ is a constant.
\end{proof}

The importance of the integral estimate is the following. We consider the case when $\lambda=0$ and $R\geq 0$.

\begin{Pro}\label{wang16}
Assume $(M,g,f)$ satisfies $Ric_f\geq 0$ on $M$ with $R>0$. Then there is no nontrivial harmonic function on $(M,g)$ with finite energy.
\end{Pro}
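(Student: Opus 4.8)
The plan is to read off the conclusion directly from the integral inequality of Proposition \ref{wangjp2}, specialized to the degenerate case $\lambda=0$. First I would note that all of the standing hypotheses of this section are in force for the pair $(M,g,f)$: we have $Ric_f\geq \lambda g$ with $\lambda\geq 0$, nonnegative scalar curvature $R\geq 0$, and the growth bound $|\nabla f|\leq \alpha d(x,p)+b$. Hence for any finite-energy harmonic function $u$ on $M$ the inequality
$$
\int_M|\nabla^2 u|^2+\frac{1}{2}\int_M R|\nabla u|^2\leq \frac{n\lambda}{2}\int_M|\nabla u|^2
$$
is available. Since here $Ric_f\geq 0$ we take $\lambda=0$, which kills the right-hand side and leaves
$$
\int_M|\nabla^2 u|^2+\frac{1}{2}\int_M R|\nabla u|^2\leq 0.
$$

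Next I would exploit that each term on the left is nonnegative. The Hessian term $\int_M|\nabla^2 u|^2$ is manifestly $\geq 0$, and because $R>0$ the weighted term $\frac{1}{2}\int_M R|\nabla u|^2$ is $\geq 0$ as well. Therefore both must vanish; in particular
$$
\int_M R|\nabla u|^2=0.
$$
As the integrand $R|\nabla u|^2$ is nonnegative, it must vanish identically on $M$, and since $R>0$ pointwise this forces $|\nabla u|\equiv 0$. Consequently $u$ is constant, which is exactly the assertion that no nontrivial finite-energy harmonic function exists. As a byproduct, the vanishing of $\int_M|\nabla^2 u|^2$ records that $\nabla^2 u=0$, so $\nabla u$ is a parallel vector field; this is consistent with, though weaker than, the constancy just obtained.

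The only step that requires any care—and the closest thing to an obstacle—is the passage from $\int_M R|\nabla u|^2=0$ to $|\nabla u|\equiv 0$. This uses that $R$ is strictly positive at every point, so that the nonnegative product $R|\nabla u|^2$ can integrate to zero only if $|\nabla u|$ vanishes everywhere. If one wished to relax the hypothesis to $R\geq 0$ with $R>0$ merely on some nonempty open set, the argument would instead give $|\nabla u|\equiv 0$ on that open set, whence $u$ is locally constant there; one would then invoke the real-analyticity of the harmonic function $u$ on the (analytic) soliton metric $g$ to propagate constancy to all of $M$. Under the stated hypothesis $R>0$, however, the direct argument above suffices and no analyticity input is needed.
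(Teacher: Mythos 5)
Your proof is correct and takes essentially the same approach as the paper: both specialize the integral inequality of Proposition \ref{wangjp2} (i.e., (\ref{bochner2})) to $\lambda=0$, note that the two nonnegative terms on the left must then vanish, and use the positivity of $R$ to conclude. The only cosmetic difference is that the paper argues by contradiction (a nontrivial $u$ would make $\nabla u$ a nonzero parallel field, forcing $R=0$ and contradicting $R>0$), whereas you conclude $|\nabla u|\equiv 0$ directly from the pointwise condition $R>0$; these are the same argument organized differently.
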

\begin{proof}
We argue by contradiction.
Assume that there is a nontrivial harmonic function with finite energy on $(M,g)$.
By (\ref{bochner2}) we know that
$$
\int_M|\nabla^2 u|^2+\frac{1}{2}\int_M R|\nabla u|^2=0.
$$
Hence $\nabla u$ is a parallel vector field on $M$ and $R=0$, a contradiction with $R>0$.
\end{proof}

This result slightly generalizes a Liouville type theorem (Theorem 4.1 in \cite{MS}) on a gradient steady Ricci soliton.
For completeness we carry on the argument above to the case when $(M,g)$ is a gradient steady Ricci soliton, which gives a new proof of a result due to Munteanu-Sesum \cite{MS} that there is no nontrivial harmonic function with finite energy on the steady Ricci sliton $(M,g)$. In fact, assume $(M, g,f)$ is a nontrivial steady Ricci soliton.
Recall that it is well-known that either $R>0$ or $R=0$ on $M$. By (\ref{wang16}), we have $R=0$, and then the equation
$$
\Delta_f R=-2|Ric|^2,
$$
we know that $Ric=0$ on $M$. By Yau's result \cite{SY}, we know that there is no nontrivial harmonic function with finite energy.

The argument above also shows that if $(M,g,f)$ satisfies $Ric_f\geq 0$ on $M$ with $R\geq0$ and there is a nontrivial harmonic function with finite energy on $M$, then $(M,g)$ is scalar-flat, i.e., $R=0$ and $\Delta f\geq 0$ on $M$.

\end{document}